\newtheorem{theorem}{Theorem}[section]
\newtheorem{lemma}[theorem]{Lemma}
\theoremstyle{definition}
\newtheorem{example}[theorem]{Example}
\theoremstyle{remark}
\numberwithin{equation}{section}
\newcommand{\nc}{\newcommand}
\renewcommand{\frak}{\mathfrak}
\providecommand{\cal}{\mathcal}
\renewcommand{\bold}{\mathbf}
\nc \Ab{{\ensuremath{\bold A}}}
\nc \ab{{\ensuremath{\bold a}}}
\nc \bb{{\ensuremath{\bold b}}}
\nc \cb{{\ensuremath{\bold c}}}
\nc \Bb{{\ensuremath{\bold B}}}
\nc \Gb{{\ensuremath{\bold G}}}
\nc \Qb{{\ensuremath{\bold Q}}}
\nc \Rb{{\ensuremath{\bold R}}} \nc \Cb{{\ensuremath{\bold C}}} 
\nc \Eb{{\ensuremath{\bold E}}}
\nc \eb{{\ensuremath{\bold e}}}
\nc \Db{{\ensuremath{\bold D}}}
\nc \Fb{{\ensuremath{\bold F}}}
\nc \ib{{\ensuremath{\bold i}}}
\nc \jb{{\ensuremath{\bold j}}}
\nc \kb{{\ensuremath{\bold k}}}
\nc \nb{{\ensuremath{\bold n}}}
\nc \rb{{\ensuremath{\bold r}}}
\nc \Pb{{\ensuremath{\bold P}}}
\nc \pb{{\ensuremath{\bold p}}}
\nc \SPb{{\ensuremath{\bold {SP}}}}
\nc \Zb{{\ensuremath{\bold Z}}} 
\nc \zb{{\ensuremath{\bold z}}} 
\nc \gb{{\ensuremath{\bold g}}} 
\nc \fb{{\ensuremath{\bold f}}} 
\nc \ub{{\ensuremath{\bold u}}} 
\nc \vb{{\ensuremath{\bold v}}} 
\nc \yb{{\ensuremath{\bold y}}} 
\nc \xb{{\ensuremath{\bold x}}} 
\nc \xib{{\ensuremath{\bold \xi}}} 
\nc \Nb{{\ensuremath{\bold N}}} 
\nc \Hb{{\ensuremath{\bold H}}} 
\nc \wb{{\ensuremath{\bold w}}} 
\nc \Wb{{\ensuremath{\bold W}}} 
\nc \syz{{\mathbf {syz}}}
\nc \bnoll{{\ensuremath{\bold 0}}} 
\nc \mf{\frak m} \nc \mh{\hat{\m}} 
\nc \nf{\frak n}
\nc \Of{\frak O}
\nc \rf{\frak r}
\nc \mufr{{\mathbf \mu}}
\nc \hf{\frak h} 
\nc \qf{\frak q} 
\nc \bfr{\frak b} 
\nc \kfr{\frak k} 
\nc \pfr{\frak p} 
\nc \af{\frak a }
\nc \cf{\frak c }
\nc \sfr{\frak s} 
\nc \ufr{\frak u} 
\nc \g{\frak g} 
\nc \gA{\g_{\Ao}} 
\nc \lfr{\frak l}
\nc \afr{\frak a}
\nc \gfh{\hat {\frak g}}
\nc \gl{\frak { gl }}
\nc \Sl{\frak {sl}}
\nc \SU{\frak {SU}}
\nc{\Homf}{\frak{Hom}}
\newcommand{\on}{\operatorname}
\nc\hankel{\on {Hankel}}
\nc\row{\on {row\ }}
\nc\nullity{\on {nullity }}
\nc\col{\on {col\ }}
\nc\rowm{\on {Row \ }}
\nc\loc{\on {lc \ }}
\nc\nullo{\on {null\ }}
\nc\Nul{\on {Nul\ }}
\nc \Ann {\on {Ann }}
\nc \Ass {\on {Ass \ }}
\nc \Coker {\on {Coker}}
\nc \Co{\on C}
\nc \Homo{\on {Hom}}
\nc \Ker {\on {Ker}}
\nc \omod{\on {mod}}
\nc \No {\on N}
\nc \NGo {\on {NG}}
\nc \Oo {\on O}
\nc \ch {\on {ch}}
\nc \rko {\on {rk}}
\nc \Sing {\on {Sing\ }}
\nc \Reg {\on {Reg}}
\nc \CoI {\on {CI}}
\nc \CoM {\on {CM}}
\nc \Gor {\on {Gor}}
\nc \Type {\on {Type}}
\nc \can {\on {can}}
\nc \Top {\on {T}}
\nc \rel {\on {rel}}
\nc \sgn {\on {sgn }}
\nc \trdeg {\on {tr.deg}}
\nc \codim {\on {codim }}
\nc \coht {\on {coht}}
\nc \divo {\on {div \ }}
\nc \coh {\on {coh}}
\nc \Clo {\on {Cl}}
\nc \embdim{\on {embdim}}
\nc \embcodim{\on {embcodim \ }}
\nc \qcoh {\on {qcoh}}
\nc \grad {\on {grad}\ }
\nc \grade {\on {grade}}
\nc \hto {\on {ht}}
\nc \depth {\on {depth}}
\nc \prof {\on {prof}}
\nc \reso{\on {res}}
\nc \ind{\on {ind}}
\nc \prodo{\on {prod}}
\nc \coind{\on {coind}}
\nc \Con{\on {Con}}
\nc \Crit{\on {Crit}}
\nc \Der{\on {Der}}
\nc \Char{\on {Char}}
\nc \Ch{\on {Ch}}
\nc \Ext{\on {Ext}}
\nc \Eo{\on {E}}
\nc \End{\on {End}}
\nc \Ad{\on {Ad}}
\nc \gr{\on {gr}}
\nc \Fo{\on {F}}
\nc \Gr{\on {Gr}}
\nc \Go{\on {G}}
\nc \GFo{\on {GF}}
\nc \Glo{\on {Gl}}
\nc \Ho{\on {H}}
\nc \CMo{\on {\CM}}
\nc \SCM{\on {SCM}}
\nc \hol{\on {hol}}
\nc{\sgd}{\on{sgd}}
\nc \supp{\on {supp}}
\nc \ssupp{\on {s-supp}}
\nc \singsupp{\on {singsupp}}
\nc \msupp{\on {msupp}}
\nc \spec{\on {spec}}
\nc \spano{\on {span }}
\nc \Span{\on {Span }}
\nc \Max{\on {Max}}
\nc \Min{\on {Min}}
\nc \Mod{\on {Mod}}
\nc \Rad {\on {Rad}}
\nc \rad {\on {rad}}
\nc \rank {\on {rank}}
\nc \range {\on {range}}
\nc \Slo{\on {SL}}
\nc \soc {\on {soc}}
\nc \Irr {\on {Irr}}
\nc \Imo {\on {Im}}
\nc \SSo{\on {SS}}
\nc \lub{\on {lub}}
\nc \gldim{\on {gl.d.}}
\nc \pdo{\on {p.d.}} 
\nc \ido{\on {i.d.}} 
\nc \dSSo{\dot {\SSo}}
\nc \So{\on S}
\nc \Io{\on I}
\nc \Jo{\on J}
\nc \jo{\on j}
\nc \Ko{\on K}
\nc \PBW{\Ac_{PBW}}
\nc \Ro{\on R}
\nc \To{\on T}
\nc \Ao{\on A}
\nc \Do{{\on D}}
\nc \Bo{\on B}
\nc \Po{\on P}
\nc \Qo{\on Q}
\nc \Zo{\on Z}
\nc \U{\on U}
\nc \wt{\on {wt}}
\nc \Uh{\hat {\U}}
\nc \T{\on T}
\nc \Lo{\on L}
\nc{\dop}{\on d}
\nc{\eo}{\on e}
\nc{\ado}{\on{ad}}
\nc{\Tot}{\on{Tot}}
\nc{\Aut}{\on{Aut}}
\nc{\sinc}{\on {sinc}}
\nc{\overrightleftarrows}[2]{\overset{#1}{\underset{#2}{\rightleftarrows}}}
\nc{\CCF}{\cal{CF}}
\nc{\CDF}{\cal{DF}}
\nc{\CHC}{\check{\cal C}}
\nc{\Cone}{\on{Cone}}
\nc{\dec}{\on{dec}}
\nc{\Diff}{\on{Diff}}
\nc{\dirlim}{\underset{\to}{\on{lim}}}
\nc{\dpar}{\partial}
\nc{\GL}{\on{GL}}
\nc{\CGr}{\cal{G}r}
\nc{\pr}{\on{pr}}
\nc{\semid}{|\!\!\!\times}
\nc{\Hom}{\on{Hom}}
\nc \RHom{\on {RHom}}
\nc \Proj{\mathrm {Proj\ }}
\nc \proj{\mathrm {proj}}
\nc{\Id}{\on{Id}}
\nc{\id}{\on{id}}
\nc{\Ima}{\on{Im}}
\nc{\invtimes}{\underset{\gets}{\otimes}}
\nc{\invlim}{\underset{\gets}{\on{lim}}}
\nc{\Lie}{\on{Lie}}
\nc{\re}{\on{Re }}
\nc{\Pic}{\on{Pic }}
\nc{\LPic}{\on{LPic }}
\nc{\Sch}{\on{Sch}}
\nc{\Sh}{\on{Sh}}
\nc{\Set}{\on{Set}}
\nc{\spo}{\on{sp\  }}
\nc{\Spec}{\on{Spec}}
\nc{\mSpec}{\on{mSpec}}
\nc{\Specb}{\bold {Spec}}
\nc{\Projb}{\bold {Proj}}
\nc{\Specan}{\on{Specan}}
\nc{\Spo}{\on{Sp}}
\nc{\Spf}{\on{Spf}}
\nc{\sym}{\on{sym}}
\nc{\symm}{\on{symm}}
\nc{\rop}{\on{r}}
\nc{\Td}{\on{Td}}
\nc{\Tor}{\on{Tor}}
\nc{\Artin}{\cal{A}rtin}
\nc{\Dgcoalg}{\cal{D}gcoalg}
\nc{\Dglie}{\cal{D}glie}
\nc{\Ens}{\cal{E}ns}
\nc{\Fsch}{\cal{F}sch}
\nc{\Groupoids}{\cal{G}roupoids}
\nc{\Holie}{\cal{H}olie}
\nc{\Mor}{\cal{M}or}
\nc{\CF}{\ensuremath{\cal{F}}}
\nc \Kc{\ensuremath{\cal K}}
\nc \lcc{{\mathcal l}} 
\nc \CC{{\ensuremath{\cal C}}} 
\nc \Cc{{\ensuremath {\cal C}}}
\nc \Pc{{\ensuremath{\cal P}}}
\nc \Dc{\ensuremath{\mathcal D}}
\nc \Ac{{\ensuremath{\cal A}}} 
\nc \Bc{{\ensuremath{\cal B}}}
\nc \Ec{{\ensuremath{\cal E}}}
\nc \Fc{{\ensuremath{\cal F}}}
\nc \Mcc{{\ensuremath{\cal M}}} 
\nc \hM{\hat{\Mcc}} 
\nc \bM{\bar {\Mcc}} 
\nc\hbM{\hat{\bar \Mcc}}  
\nc \Nc{{\ensuremath{\cal N}}}
\nc \Hc{{\ensuremath{\cal H}}} 
\nc \Ic{{\ensuremath{\cal I}}} 
\nc \Oc{\ensuremath{{\cal O}}}
\nc \Och{\hat{\cal O}} 
\nc \Sc{{\ensuremath{{\cal S}}}}
\nc \Tc{\ensuremath{{\cal T}}} 
\nc \Vc{{\ensuremath{{\cal V}}}} 
\nc{\CA}{{\ensuremath{{\cal A}}}}
\nc{\CB}{{\ensuremath{{\cal B}}}}
\nc{\C}{{\ensuremath{{\cal F}}}}
\nc{\Gc}{{\ensuremath{{\cal G}}}}
\nc{\CH}{\ensuremath{\mathcal H}}
\nc{\CI}{{\ensuremath{{\cal I}}}}
\nc{\CM}{{\ensuremath{{\cal M}}}}
\nc{\CN}{{\ensuremath{{\cal N}}}}
\nc{\CO}{{\ensuremath{{\cal O}}}}
\nc{\Rc}{{\ensuremath{{\cal R}}}}
\nc{\CT}{{\ensuremath{\mathcal T}}}
\nc{\CU}{\ensuremath{{\cal U}}}
\nc{\CV}{\ensuremath{{\cal V}}}
\nc{\CZ}{\ensuremath{{\cal Z}}}
\nc{\Homc}{\ensuremath{{\cal {Hom}}}}
\nc{\Tab}{\ensuremath{{\mbox{Tab}}}}
\nc{\STab}{\ensuremath{{\mbox{STab}}}}
\nc{\fa}{\frak{a}}
\nc{\fA}{\frak{A}}
\nc{\fg}{\frak{g}}
\nc{\fh}{\frak{h}}
\nc{\fI}{\frak{I}}
\nc{\fK}{\frak{K}}
\nc{\fm}{\frak{m}}
\nc{\fP}{\frak{P}}
\nc{\fS}{\frak{S}}
\nc{\ft}{\frak{t}}
\nc{\fX}{\frak{X}}
\nc{\fY}{\frak{Y}}
\nc{\bF}{\bar{F}}
\nc{\bCP}{\bar{\cal{P}}}
\nc{\bm}{\mbox{\bf{m}}}
\nc{\bT}{\mbox{\bf{T}}}
\nc{\bS}{\mbox{\bf{S}}}
\nc{\hB}{\hat{B}}
\nc{\hC}{\hat{C}}
\nc{\hP}{\hat{P}}
\nc{\htest}{\hat P}
\nc{\nen}{\newenvironment}
\nc{\ol}{\overline}
\nc{\ul}{\underline}
\nc{\ra}{\to}
\nc{\lla}{\longleftarrow}
\nc{\lra}{\longrightarrow}
\nc{\Lra}{\Longrightarrow}
\nc{\Lla}{\Longleftarrow}
\nc{\Llra}{\Longleftrightarrow}
\nc{\hra}{\hookrightarrow}
\nc{\iso}{\overset{\sim}{\lra}}
\nc{\dsize}{\displaystyle}
\nc{\sst}{\scriptstyle}
\nc{\tsize}{\textstyle}
\newcommand {\ZZ}{\mathbb{Z}}
\newcommand{\Res}{\mathrm{Res}}
\newcommand{\Disc}{\mathrm{Disc}}
\newcommand{\UU}{\mathbb{U}}
\begin{document}
\title{Resultant of an equivariant polynomial system with respect to direct product of symmetric groups}

\author{Sonagnon Julien Owolabi, Ibrahim Nonkan\'e, Joel Tossa}

\address{ Institut de Math\'ematiques et de Sciences Physiques, IMSP, Universit\'e d' Abomey-Calavi, B\'enin}
\email{julien.owolabi@imsp-auc.org, joel.tossa@imsp-uac.org},
\address{D\'epartement d'\'economie et de math\'ematiques appliqu\'ees, IUFIC, Universit\'e Thomas Sankara, Burkina faso}
\email{ibrahim.nonkane@uts.bf}


\maketitle
\begin{abstract}
In this note, we consider the resultant of systems of homogeneous multivariate polynomials which are equivariant under the action of  direct product of two symmetric groups. We establish a decomposition formula for the resultant of such systems. Thanks to that decomposition formula we prove that the discriminant of an invariant multivariate homogeneous polynomial under a direct product of symmetric groups  splits into smaller resultants that are easier to compute.
 
\textbf{keywords}:\ {commutative algebra, symbolic computation, resultant, discriminant, divided difference, direct product of symmetric groups}
\end{abstract}

\section{Motivation and Introduction}

Solving algebraic systems of  polynomial equations $f_1, f_2, \ldots,f_n$ in several variable is  a fundamental problem with in  computational algebra with many applications (cryptology, robotics, biology, physic, coding theory, etc...). The analysis of such systems is based on the study of the resultant \cite{Jou91}.  System which are invariant under the action of a group may be great importance since symmetry are very relevant in physical sciences as it has to with energy. Thus Laurent Bus\'e and Anna Karasoulou has studied the resultant of an equivariant polynomial system with respect to $\mathcal{S}_n$ group  of permutations  on a set of variables $ \{ x_1,\ldots,x_n \} $ \cite{BuKa16}. They develop a nice decomposition of that resultant which leads to the decomposition of the discriminant of a symmetric polynomial. In some situations the permutations among the set $x_1,\ldots,x_n$ may not be effective  in the sense that some action may hindered or neglected, and in this case the symmetric group would not be of the best description of the symmetry. For example the coordinates $x_1,\ldots,x_n$ of the particules of a given molecule may separated into two subsets $\{x_1, \ldots, x_p\}$ and $\{ x_{p+1}, \ldots, x_n\}$ which do not interact. The symmetry is therefore described by the direct product of symmetric groups $\mathcal{S}_p \times \mathcal{S}_{n-p},$ where $\mathcal{S}_p$ and $\mathcal{S}_{n-p}$  are groups of permutations on  $\{x_1, \ldots, x_p\}$ and   $\{ x_{p+1}, \ldots, x_n\}$ respectively. A similar situation may occur when the coordinates separated into three or more subsets, leading to a product of three or more symmetric groups. Therefore we think that some results of \cite{BuKa16} may be generalizes to systems which are equivariant with respect to the product of symmetric groups, even to other groups. In this paper, we attempt to study the resultant of  an equivariant system with respect  to the direct product of two subgroup of $\mathcal{S}_n.$ This paper partially expository, in the sense we realize that  the same techniques that have been used is the case of the symmetric groups \cite{BuKa16} work for the case for the direct product of symmetric groups, then we make great use of them in this paper. This paper is somehow a variant of \cite{BuKa16} and we mainly refer to it for the proofs.

A polynomial system $\mathcal{A}= \{ f_1, f_2, \ldots,f_n\}$ is said to be {\it equivariant with respect} to a finite group $G $ if for all $g \in G , f_i \in \mathcal{A}, g(f_i) \in \mathcal{A}, i= 1,\ldots,n.$ In other words $\mathcal{A}$ is globally stable under the finite group $G$.
Let a system of $n$ homogeneous polynomials  $f^{\{1\}},\ldots, f^{\{p\}}, f^{\{p+1\}},\ldots, f^{\{n\}}$ 
of same degree $d$ equivariant to the direct product $\mathcal{S}_{\{1
,\ldots,p\}}\times\mathcal{S}_{\{p+1,\ldots,n\}} $ of two symmetric subgroups of  $\mathcal{S}_{n}$ with $1\leq p< n$. the action of $\mathcal{S}_{\{1,\ldots,p\}}\times\mathcal{S}_{\{p+1,\ldots,n\}} $ on  $f^{\{1\}},\ldots, f^{\{p\}}, f^{\{p+1\}},\ldots, f^{\{n\}}$ is described as follows. Let $\sigma_1\in \mathcal{S}_{\{1,\ldots,p\}}$, and $\sigma_2\in \mathcal{S}_{\{p+1
,\ldots,n\}}$ we have for all  \ $ i= 1,\ldots,n $ \\
$$(\sigma_1,\sigma_2)\Big(f^{\{i\}} \Big)(x_1,\ldots,x_p,x_{p+1},\ldots,x_n)=f^{\{i\}}(x_{\sigma_1(1)},\ldots x_{\sigma_1(p)},x_{\sigma_2(p+1)},\ldots,x_{\sigma_2(n)}).$$
We assume that for all $k \in \{1,\ldots,p\}$ $\sigma= (\sigma_1, \sigma_2) \in \mathcal{S}_{\{1,\ldots,p\}}\times\mathcal{S}_{\{p+1,\ldots,n\}}$, 
\begin{equation}
\sigma \Big( f^{\{k\}} \Big)= \begin{cases}
\sigma_1 \Big( f^{\{k\}} \Big)= f^{\{ {\sigma_1(k)} \}}\  \mbox{if}\  k\in \{1,\ldots,p\} \\
\sigma_2 \Big( f^{\{k\}} \Big)= f^{\{ {\sigma_2(k)} \}}\  \mbox{if}\  k\in \{p+1,\ldots,n\}.
\end{cases}
\end{equation}
Under this assumption, the polynomial system is equivariant with respect to the direct product $\mathcal{S}_{\{1,\ldots,p\}}\times\mathcal{S}_{\{p+1,\ldots,n\}}.$ In what follows, we set 
\begin{equation}\label{eq:GG}
 f^{\{\sigma(i)\}} := \sigma \Big( f^{\{i\}} \Big), \  \sigma \in \mathcal{S}_{\{1,\ldots,p\}}\times\mathcal{S}_{\{p+1,\ldots,n\}}, i=1,\ldots,n.
 \end{equation}
In this work, we will study the resultant of such systems. As an application, we obtain a decomposition formula for the discriminant of an invariant multivariate homogeneous polynomial under the action of a direct product of two symmetric groups.

\section{Resultant of a $\mathcal{S}_{\{1,\ldots,p\}}\times \mathcal{S}_{\{p+1,\ldots,n\}}$-equivariant polynomial system}\label{sec:resultant}
Let $R$ be commutative ring and denote by $R[x_1,\ldots,x_n]$ the ring of polynomials in $n \geq 2$ variables which is graded with the usual weights: $\deg (x_i)=1$ for all $i \in \{ 1,\ldots,n\}.$
In this section, we consider a polynomial system of $n$ homogeneous polynomials $f^{\{1\}},\ldots, f^{\{p\}}, f^{\{p+1\}},\ldots, f^{\{n\}}$ in $R[x_1,\ldots,x_n]$ of same degree $d$ which is equivariant to the direct product $\mathcal{S}_{\{1
,\ldots,p\}}\times\mathcal{S}_{\{p+1
,\ldots,n\}} $ of two subgroups of  $\mathcal{S}_{n}$ with $1\leq p< n$.

\subsection{Partitions}

Let $\lambda:=(\lambda_1,\lambda_2,\ldots,\lambda_{r_1})$ be a sequence such that $\lambda_1\geq \cdots \geq \lambda_{r_1} > 0$. When $\sum_{i=1}^{r_1}\lambda_i=p$, we will say such a $\lambda$ is a partition of $p$, and write $\lambda\vdash p.$

Given a partitions $\lambda  \vdash p$ its associated multinomial coefficient is defined as the integer
\begin{equation}\label{eq:mlambda}
m_{\lambda}:=\frac{1}{\prod_{j=1}^{n}s_{j}!} {{p}\choose{\lambda_1, \lambda_2,\ \ldots , \lambda_{r_1}}}= \frac{ p!}{(\prod_{j=1}^{p}s_{j}!) \lambda_1 !\lambda_2! \cdots \lambda_{r_1}!} .
\end{equation}
where $s_j$ denotes the number of boxes having exactly $j$ objects,   $j\in [p]$ for the partition $\lambda \vdash p.$

Let $\Lambda=(\lambda,\lambda')$ be a couple of partition swhere $\lambda=(\lambda_1,\lambda_2,\ldots,\lambda_{r_1}) \vdash p$ and $\lambda'=(\lambda'_1,\lambda'_2,\ldots,\lambda'_{r_2}) \vdash q$, the  we will write  $(\lambda,\lambda')\vdash (p,q)$ or $ \Lambda \vdash (p,q)$.
Given a couple of partitions $\Lambda=(\lambda,\lambda')\vdash (p,q)$ such that $p+q=n$, we consider the following homomorphism of algebras.

\begin{eqnarray}\label{eq:rhol}
 \rho_{\Lambda} : R[x_{1},\ldots,x_{n}] & \rightarrow & R[y_{1},\ldots,y_{r_1},y'_{1},\ldots,y'_{r_2}] \\ \nonumber
 f(x_{1},\ldots,x_{n}) & \mapsto & f( \underbrace{y_{1},\ldots,y_{1}}_{\lambda_{1}} ,\ldots, \underbrace{y_{r_1},\ldots,y_{r_1}}_{\lambda_{r_1}}, \underbrace{y'_{1},\ldots,y'_{1}}_{\lambda'_{1}} ,\ldots, \underbrace{y'_{r_2},\ldots,y'_{r_2}}_{\lambda'_{r_2}}).
 \end{eqnarray} where $y_{1},\ldots,y_{r_1},y'_{1},\ldots,y'_{r_2}$ are new indeterminates.\\
For two integers ${i,j} \in \{ 1,\ldots,n \}$ and $\pi \in \mathcal{S}_{1,\ldots,p} \times \mathcal{S}_{p+1, \ldots,n}$ such that $\pi(l)=l$ if $l\notin \{i,j \}$ and $\pi(i)=j$, then 
\begin{equation} \label{eq:BB}
 f^{\{i\}} -f^{\{j\}} = f^{\{i\}}-\pi(f^{\{i\}}) \in (x_i-x_j).
 \end{equation}
Therefore the polynomials systems $f^{\{1\}}, \ldots f^{\{p\}}$ and  $f^{\{p+1\}} \ldots f^{\{n\}}$ admit divided differences. From \cite[Lemma 2.1]{BuKa16} and \eqref{eq:GG}
for any subsets $\{i_1, \ldots, i_k \} \subset \{1,\ldots,p\}$, $\{j_1, \ldots, j_l\} \subset \{p+1,\ldots,n \}$ and $\phi=(\pi, \sigma) \in  \mathcal{S}_{1,\ldots,p} \times \mathcal{S}_{p+1, \ldots,n}$, we have 
\begin{equation}
\phi (f^{\{i_1, \ldots, i_k \}})= f^{ \{ \pi(i_1), \ldots, \pi(i_k) \}},\  \mbox{and}\  \phi (f^{\{j_1, \ldots, j_l \}})= F^{ \{ \sigma (j_1), \ldots, \sigma(j_l) \}}.
\end{equation}
Whenever $\rho_{\Lambda} (x_i)=\rho_{\Lambda}(x_j)$, by \eqref{eq:BB} we have 
$$ \rho_{\Lambda} (f^{\{i\}})= \rho_{\Lambda}(f^{\{j\}}).$$ 

So, for any integer $ i_1 \in \{ 1, \ldots, r_1 \}$ (respectively $ i_2 \in \{ 1, \ldots, r_2 \}$ ) we define the homogeneous polynomial
$$ f_{\Lambda}^{\{i_1\}} := \rho_{\Lambda} (f^{\{j_1\}}),\  (\  \mbox{respectively}\  f_{\Lambda}^{\{i_2\}} := \rho_{\Lambda} (F^{\{j_2\}}),$$
where $ j_1 \in \{1,\ldots, p \}$ such that $ \rho_{\Lambda}(x_{j_1})=y_{i_1},$ ( respectively $ j_2 \in \{p+1,\ldots, \} $ such that $ \rho_{\Lambda}(x_{j_2})=y_{i_2}$).\\
For  $I = \{i_1, \ldots, i_k \} \subset  \{1,\ldots, p \}$, define  $J = \{ j_1, \ldots, j_k \} \subset  \{1,\ldots, r_1 \}$
by the equality $ \rho_{\Lambda} (x_{i_r})= y_{j_r}$ forall $r \in \{ 1, \ldots, k\}$ ( respectively  
 $I' = \{i'_1, \ldots, i'_l \} \subset  \{p+1,\ldots, n \}$, define  $J' = \{ j'_1, \ldots, j'_l \} \subset  \{1,\ldots, r_2 \}$
by the equality $ \rho_{\Lambda} (x_{i'_r})= y_{j'_r}$ forall $r \in \{ 1, \ldots, l\}$ ). Then if $|I|=|J|$ (respectively $|I'|=|J'|$) we have
$$ \rho_{\Lambda}(f^{I})= \rho_{\Lambda} (f^{J}),\ (\ \mbox{respectively}\   \rho_{\Lambda}(f^{I'})= \rho_{\Lambda} (f^{J'}).$$

\subsection{The decomposition formula}

\begin{theorem}\label{thm:maintheorem} Assume that $n\geq 2$ and  assume a system of $n$ homogeneous polynomials $f^{\{1\}},\ldots, f^{\{p\}}, f^{\{p+1\}},\ldots, f^{\{n\}}$ in $R[x_1,\ldots,x_n]$ of the same degree $d$ equivariant with respect to the direct product of two symmetric groups  $\mathcal{S}_{\{1
,\ldots,p\}}\times\mathcal{S}_{\{p+1
,\ldots,n\}} $ with $1\leq p<n$. Let's put $q=n-p$, $\Lambda=(\lambda, \lambda')\vdash (p, q)$. 
. 

\noindent $\bullet$ If $p\leq d$ and $q\leq d$ then :
\begin{multline*}
\Res\left(f^{\{1\}},\ldots,f^{\{p\}},f^{\{p+1\}},\ldots,f^{\{n\}}\right)=\\ \pm
 \prod_{\substack{\Lambda\vdash (p,q)}}
\Res\left( f_{\Lambda}^{\{1\}}, f_{\Lambda}^{\{1,2\}}, \ldots, f_{\Lambda}^{\{1,2,\ldots,r_1\}},f_{\Lambda}^{\{p+1\}}, f_{\Lambda}^{\{p+1,p+2\}}, \ldots, f_{\Lambda}^{\{p+1,p+2,\ldots,p+r_2\}} 
\right)^{m_{\lambda}m_{\lambda'}}.
\end{multline*}

\noindent $\bullet$ If $p> d$ and $q\leq d$ then
\begin{multline*}
\Res\left(f^{\{1\}},\ldots,f^{\{p\}},f^{\{p+1\}},\ldots,f^{\{n\}}\right)=
 \pm \left(f^{\{1,\ldots,d+1\}}\right)^{\mu_1}\\ \times\prod_{\substack{\Lambda\vdash (p,q)\\ r_1\leq d}}
\Res\left( f_{\Lambda}^{\{1\}}, f_{\Lambda}^{\{1,2\}}, \ldots, f_{\Lambda}^{\{1,2,\ldots,r_1\}},f_{\Lambda}^{\{p+1\}}, f_{\Lambda}^{\{p+1,p+2\}}, \ldots, f_{\Lambda}^{\{p+1,p+2,\ldots,p+r_2\}} 
\right)^{m_{\lambda}m_{\lambda'}}.
\end{multline*}

\noindent $\bullet$ If $p\leq d$ and $q> d$ then
\begin{multline*}
\Res\left(f^{\{1\}},\ldots,f^{\{p\}},f^{\{p+1\}},\ldots,f^{\{n\}}\right)=
 \pm \left(f^{\{p+1,\ldots,p+1+d\}}\right)^{\mu_2}\\ \times\prod_{\substack{\Lambda\vdash (p,q)\\ r_2\leq d}}
\Res\left( f_{\Lambda}^{\{1\}}, f_{\Lambda}^{\{1,2\}}, \ldots, f_{\Lambda}^{\{1,2,\ldots,r_1\}},f_{\Lambda}^{\{p+1\}}, f_{\Lambda}^{\{p+1,p+2\}}, \ldots, f_{\Lambda}^{\{p+1,p+2,\ldots,p+r_2\}} 
\right)^{m_{\lambda}m_{\lambda'}}.
\end{multline*}

\noindent $\bullet$ If $p> d$ and $q> d$ then
\begin{multline*}
\Res\left(f^{\{1\}},\ldots,f^{\{p\}},f^{\{p+1\}},\ldots,f^{\{n\}}\right)=
 \pm \left(f^{\{1,\ldots,d+1\}}\right)^{\mu_1} \times \left(f^{\{p+1,\ldots,p+1+d\}}\right)^{\mu_2}\\ \times\prod_{\substack{\Lambda\vdash (p,q)\\ r_1\leq d,r_2\leq d}}
\Res\left( f_{\Lambda}^{\{1\}}, f_{\Lambda}^{\{1,2\}}, \ldots, f_{\Lambda}^{\{1,2,\ldots,r_1\}},f_{\Lambda}^{\{p+1\}}, f_{\Lambda}^{\{p+1,p+2\}}, \ldots, f_{\Lambda}^{\{p+1,p+2,\ldots,p+r_2\}} 
\right)^{m_{\lambda}m_{\lambda'}}.
\end{multline*}

where
\begin{equation*}
   \mu_1:=nd^{n-1}-\sum_{ \substack{\lambda\vdash p \\ r_1 \leq d}} m_{\lambda} 
  \left(\sum_{j=1}^{r_1}\dfrac{d(d-1)\cdots(d-r_1+1)\times d^q}{d-j+1}+d(d-1)\cdots(d-r_1+1)\times qd^{q-1} \right).
  \end{equation*}
  and
\begin{equation*}
   \mu_2:=nd^{n-1}-\sum_{ \substack{\lambda'\vdash q \\ r_2 \leq d}} m_{\lambda'} 
  \left(\sum_{j=1}^{r_2}\dfrac{d(d-1)\cdots(d-r_2+1)\times d^p}{d-j+1}+d(d-1)\cdots(d-r_2+1)\times pd^{p-1} \right).
  \end{equation*}

\end{theorem}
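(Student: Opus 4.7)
The strategy mirrors \cite{BuKa16}, adapted to the product structure $\mathcal{S}_p \times \mathcal{S}_q$. The key conceptual observation is that the $\mathcal{S}_p \times \mathcal{S}_q$-orbits on $n$-tuples of projective coordinates are indexed by pairs of partitions $\Lambda = (\lambda, \lambda') \vdash (p,q)$: $\lambda$ records which of $x_1,\ldots,x_p$ coincide and $\lambda'$ does the same for $x_{p+1},\ldots,x_n$. Because the two blocks are acted on independently, the multiplicities factor as $m_\lambda m_{\lambda'}$, which is ultimately what produces the product form in the theorem rather than a sum over a single partition $\vdash n$.

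First, I would pass to the universal situation in which the coefficients of $f^{\{1\}}$ and $f^{\{p+1\}}$ are independent indeterminates and the remaining $f^{\{i\}}$ are obtained by applying $\mathcal{S}_p \times \mathcal{S}_q$ via \eqref{eq:GG}. Then $\Res(f^{\{1\}},\ldots,f^{\{n\}})$ is a polynomial in these generic coefficients, homogeneous of total degree $n d^{n-1}$, and it suffices to prove the identity up to sign in this universal ring. Next, using the divided-difference property \eqref{eq:BB}, I would stratify the projective zero locus $V(f^{\{1\}},\ldots,f^{\{n\}}) \subset \mathbb{P}^{n-1}$ by the type $\Lambda$ of the coordinates, parametrize the stratum $\Sigma_\Lambda$ by the new variables $y_1,\ldots,y_{r_1}, y'_1,\ldots,y'_{r_2}$ via $\rho_\Lambda$, and check that the restriction of the equivariant system to $\Sigma_\Lambda$ coincides with the reduced system $f_\Lambda^{\{1\}}, f_\Lambda^{\{1,2\}}, \ldots, f_\Lambda^{\{1,\ldots,r_1\}}, f_\Lambda^{\{p+1\}}, \ldots, f_\Lambda^{\{p+1,\ldots,p+r_2\}}$.

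Then, invoking the multiplicativity of the Macaulay resultant along this stratification (the same Poisson-style argument used in \cite[Thm.~3.1]{BuKa16}), each stratum $\Sigma_\Lambda$ contributes the factor $\Res(f_\Lambda^{\{1\}},\ldots,f_\Lambda^{\{p+1,\ldots,p+r_2\}})^{m_\lambda m_{\lambda'}}$. The exponent counts the number of labelings of $\{1,\ldots,p\}$ and $\{p+1,\ldots,n\}$ into ordered blocks whose sizes match $\lambda$ and $\lambda'$ respectively, which is exactly formula \eqref{eq:mlambda} applied in each block and then multiplied. Under the hypothesis $p \leq d$ and $q \leq d$ every partition satisfies $r_1 \leq d$ and $r_2 \leq d$, so no stratum is excluded and the first case of the theorem follows.

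Finally, when $p > d$ (resp.\ $q > d$), the strata with $r_1 > d$ (resp.\ $r_2 > d$) correspond to overdetermined reduced systems whose resultant is not directly defined; geometrically they encode the component where $f^{\{1,\ldots,d+1\}}$ (resp.\ $f^{\{p+1,\ldots,p+1+d\}}$) vanishes identically on the stratum by the divided-difference identity \eqref{eq:BB}. One shows that their aggregated contribution collapses to a single power $\bigl(f^{\{1,\ldots,d+1\}}\bigr)^{\mu_1}$ and/or $\bigl(f^{\{p+1,\ldots,p+1+d\}}\bigr)^{\mu_2}$, and the exponents $\mu_1,\mu_2$ are forced by comparing the total degree $nd^{n-1}$ of the resultant with the degrees of the surviving factors; the closed forms in the statement come from summing the degree contributions of the reduced resultants over all admissible partitions. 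The main obstacle is this last step: one must verify that the excluded strata really do contribute a pure power of a single polynomial (rather than a more complicated product), which requires a careful local analysis of the scheme structure on the overdetermined locus as in \cite{BuKa16}; the degree bookkeeping for $\mu_1$ and $\mu_2$ is then a combinatorial identity over partitions with $r_1 \leq d$ or $r_2 \leq d$.
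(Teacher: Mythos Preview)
Your proposal is correct and follows essentially the same approach as the paper: the paper's own proof is only a brief sketch stating that one mimics \cite[Theorem~3.3]{BuKa16} by splitting the resultant via divided differences associated to each factor $\mathcal{S}_{\{1,\ldots,p\}}$ and $\mathcal{S}_{\{p+1,\ldots,n\}}$ separately, with detailed arguments deferred to an extended version. If anything, your write-up is more explicit than the paper's, spelling out the stratification by pairs of partitions, the origin of the exponent $m_\lambda m_{\lambda'}$, and the degree-count determination of $\mu_1,\mu_2$.
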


\paragraph*{\it {\bf Idea of the proof}}
The main idea of the proof is the same as the one in \cite{BuKa16}. In fact It is clear that the system $\{f^{\{1\}},\ldots, f^{\{p\}} \}$ is equivariant respect to  $\mathcal{S}_{\{1,\ldots,p\}} $ and the system $\{ f^{\{p+1\}},\ldots f^{\{n\}} \}$ is equivariant with respect to $\mathcal{S}_{\{p+1,\ldots,n\}}$. The proof goes on by splitting the resultant of  $f^{{\{i\}}}$'s into several factors by means of their divided differences associated to  $\mathcal{S}_{\{1,\ldots,p\}} $ and respectively. For the rest of the proof , we mimick  the proof of \cite[Theorem 3.3]{BuKa16}. Indeed this is a generalization of the proof of \cite[Theorem 3.3]{BuKa16}. \\
For the sake of the number of pages,the detailed discussions of the proof  will be published later in an extended version of this paper. \\

\begin{example}
Consider the following system  of $5$ homogeneous polynomials
 \\
\begin{equation*}
\begin{cases}
f^{\{1\}}=ax_1^2+bx_1^2+bx_1x_2+bx_1x_3+cx_1^2+cx_2^2+cx_3^2+x_4x_5\\ 
f^{\{2\}}=ax_2^2+bx_1x_2+bx_2^2+bx_2x_3+cx_1^2+cx_2^2+cx_3^2+x_4x_5\\
f^{\{3\}}=ax_3^2+bx_1x_3+bx_2x_3+bx_3^2+cx_1^2+cx_2^2+cx_3^2+x_4x_5\\
f^{\{4\}}=px_4^2+qx_5^2\\
f^{\{5\}}=px_5^2+qx_4^2.
\end{cases}
\end{equation*}\\
This system $\{f^{\{1\}}$, $f^{\{2\}}$, $f^{\{3\}}, f^{\{4\}}$, $f^{\{5\}} \}$ is not equivariant with respect to the symmetric group $\mathcal{S}_5$, then the  formula of \cite[Theorem 3.3]{BuKa16} cannot help to split the resultant of that polynomial system. But This system is equivariant to the direct product $\mathcal{S}_{\{1
,2,3\}}\times\mathcal{S}_{\{4,5\}} $. Then 
$f^{\{1\}}$, $f^{\{2\}}$, $f^{\{3\}}$  equivariant with  respect to $\mathcal{S}_{\{1
,2,3\}}$ and $f^{\{4\}}$, $f^{\{5\}}$ equivariant with  respect to $\mathcal{S}_{\{4,5\}}.$

\begin{multline*}
\Res\left(f^{\{1\}},f^{\{2\}},f^{\{3\}},f^{\{4\}},f^{\{5\}}\right)= \Big(f^{\{1,2,3\}}\Big)^{\mu_1}
\Res\left( f_{(3),(2)}^{\{1\}}, f_{(3),(2)}^{\{4\}}\right)^{m_{(3)}m_{(2)}} \\ \times \Res\left( f_{(2,1),(2)}^{\{1\}}, f_{(2,1),(2)}^{\{1,2\}} f_{(2,1),(2)}^{\{4\}}\right)^{m_{(2,1)}m_{(2)}} \times \Res\left( f_{(3),(1,1)}^{\{1\}}, f_{(3),(1,1)}^{\{4\}} f_{(3),(1,1)}^{\{4,5\}}\right)^{m_{(3)}m_{(1,1)}}\\ \times \Res\left( f_{(2,1),(1,1)}^{\{1\}},f_{(2,1),(1,1)}^{\{1,2\}} f_{(2,1),(1,1)}^{\{4\}} f_{(2,1),(1,1)}^{\{4,5\}}\right)^{m_{(2,1)}m_{(1,1)}}.
\end{multline*}
$f^{\{1\}}_{(3),(2)}=(a+3b+3c)x_1^2+x_4^2,\ 
f^{\{4\}}_{(3),(2)}=(p+q)x_4^2,\\
f^{\{1\}}_{(2,1),(2)}=(a+2b+2c)x_1^2+cx_2^2+bx_1x_2+x_4^2, \\
f^{\{1,2\}}_{(2,1),(2)}=(a+2b)x_1+(a+b)x_2,
f^{\{4\}}_{(2,1),(2)}=(p+q)x_4^2, \\
f^{\{1\}}_{(3),(1,1)}=(a+3b+3c)x_1^2+x_4x_5,
f^{\{4\}}_{(3),(1,1)}=px_4^2+qx_5^2, \\
f^{\{4,5\}}_{(3),(1,1)}=(p-q)x_4+(p-q)x_5, 
f^{\{1\}}_{(2,1),(1,1)}=(a+2b+2c)x_1^2+cx_2^2+bx_1x_2+x_4x_5, \\
f^{\{1,2\}}_{(2,1),(1,1)}=(a+2b)x_1+(a+b)x_2, 
f^{\{4\}}_{(2,1),(1,1)}=px_4^2+qx_5^2, \\ 
f^{\{4,5\}}_{(2,1),(1,1)}=(p-q)x_4+(p-q)x_5, f^{\{1,2,3\}}=a,\ \mu_1=8.$\\
we have  $\Res\left( f_{(3),(2)}^{\{1\}}, f_{(3),(2)}^{\{4\}}\right)=(3b+3c+a)^2(p+q)^2$
$$\Res\left( f_{(2,1),(2)}^{\{1\}}, f_{(2,1),(2)}^{\{1,2\}} f_{(2,1),(2)}^{\{4\}}\right)=(p+q)^2(a^3+3a^2b+3a^2c+2ab^2+8abc+6b^2c)^2 $$

$\Res\left( f_{(3),(1,1)}^{\{1\}}, f_{(3),(1,1)}^{\{4\}} f_{(3),(1,1)}^{\{4,5\}}\right)= (p+q)^2(p-q)^4(3b+3c+a)^2 $

$$ \Res\left( f_{(2,1),(1,1)}^{\{1\}},f_{(2,1),(1,1)}^{\{1,2\}}, f_{(2,1),(1,1)}^{\{4\}}, f_{(2,1),(1,1)}^{\{4,5\}}\right)= (p+q)^2(p-q)^4(a^3+3a^2b+3a^2c+2ab^2+8abc+6b^2c)^{12} .$$
$$
\Res\left(f^{\{1\}},f^{\{2\}},f^{\{3\}},f^{\{4\}}, f^{\{5\}} \right)= a^8(a^3+3a^2b+3a^2c+2ab^2+8abc+6b^2c)^{12}(p-q)^{16}(p+q)^{16}(3b+3c+a)^4.
$$
 
\end{example}

\section{Discriminant of a homogeneous polynomial invariant under direct product of symmetric groups }\label{sec:discriminant}

In this section, we will use the previous theorem  Theorem {\ref{thm:maintheorem}} to  develop  a decomposition formula for the discriminant of an invariant homogeneous polynomial under the action of  $\mathcal{S}_{\{1
,\ldots,p\}}\times\mathcal{S}_{\{p+1
,\ldots,n\}}.$ 
Let $f \in R[x_1,\ldots,x_p,x_{p+1},\ldots,x_n]$ of degree $d$ be a homogeneous polynomial that is invariant under direct product  $\mathcal{S}_{\{1
,\ldots,p\}}\times\mathcal{S}_{\{p+1
,\ldots,n\}} $ of symmetric groups with $1\leq p< n$.

For all $\sigma_1\in \mathcal{S}_{\{1
,\ldots,p\}}$ and for all $\sigma_2\in \mathcal{S}_{\{p+1
,\ldots,n\}}$, we have :

\begin{eqnarray}
(\sigma_1,\sigma_2)\Big(f \Big)(x_1,\ldots,x_p,x_{p+1},\ldots,x_n) &=&f(x_{\sigma_1(1)},\ldots x_{\sigma_1(p)},x_{\sigma_2(p+1)},\ldots,x_{\sigma_2(n)}). \\
 &=& f(x_1,\ldots,x_p,x_{p+1},\ldots,x_n)
\end{eqnarray}

We  will denote the partial derivatives of $F$ by
$$f^{\{i\}}(x_1,\ldots,x_p,x_{p+1},\ldots,x_n):=\frac{\partial f}{\partial x_{i}} (x_1,\ldots,x_p,x_{p+1},\ldots,x_n), \ i=1,\ldots n .$$ 
The discriminant of $F$ is defined by the equality
\begin{equation}\label{eq:discressym}
d^{a(n,d)}\Disc(f)=\Res\left(  f^{\{1\}},f^{\{2\}},\ldots,f^{\{n\}} \right) \in \UU
\end{equation} where 
$$a(n,d):=\frac{(d-1)^{n}-(-1)^{n}}{d} \in \ZZ.$$
and that it is homogeneous of degree $n(d-1)^{n-1}$.

\begin{lemma}
The set $\{ f^{\{1\}},f^{\{2\}},\ldots,f^{\{p\}},f^{\{p+1\}},\ldots,f^{\{n\}} \}$ of partial derivatives of a  $\mathcal{S}_{\{1,\ldots,p\}}\times\mathcal{S}_{\{p+1,\ldots,n\}} $-invariant homogeneous polynomial $f$ is is an equivariant polynomial system with respect to $\mathcal{S}_{\{1,\ldots,p\}}\times\mathcal{S}_{\{p+1,\ldots,n\}} $.
\end{lemma}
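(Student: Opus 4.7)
The plan is to prove the claim by a direct chain rule argument applied to the defining invariance relation. Writing $\sigma=(\sigma_1,\sigma_2)$ and extending it to a permutation of $\{1,\ldots,n\}$ in the obvious way (acting as $\sigma_1$ on $\{1,\ldots,p\}$ and as $\sigma_2$ on $\{p+1,\ldots,n\}$), the invariance of $f$ reads
\begin{equation*}
f(x_{\sigma(1)},\ldots,x_{\sigma(n)}) \;=\; f(x_1,\ldots,x_n).
\end{equation*}
I would differentiate both sides of this identity with respect to a fixed variable $x_j$.

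The right-hand side yields $f^{\{j\}}$ by definition. On the left-hand side the chain rule gives
\begin{equation*}
\sum_{k=1}^{n} \frac{\partial f}{\partial y_k}(x_{\sigma(1)},\ldots,x_{\sigma(n)})\cdot \frac{\partial x_{\sigma(k)}}{\partial x_j},
\end{equation*}
where only the term with $\sigma(k)=j$, that is $k=\sigma^{-1}(j)$, survives. This reduces to $f^{\{\sigma^{-1}(j)\}}(x_{\sigma(1)},\ldots,x_{\sigma(n)})$, which by the very definition of the group action on polynomials coincides with $\sigma\bigl(f^{\{\sigma^{-1}(j)\}}\bigr)(x_1,\ldots,x_n)$. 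Setting $i=\sigma^{-1}(j)$ (so $j=\sigma(i)$) gives the key identity
\begin{equation*}
\sigma\bigl(f^{\{i\}}\bigr) \;=\; f^{\{\sigma(i)\}} \qquad \text{for all } i\in\{1,\ldots,n\},
\end{equation*}
which is precisely the equivariance condition (1.1).

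It then remains to verify the block structure built into (1.1): if $i\in\{1,\ldots,p\}$ then $\sigma(i)=\sigma_1(i)\in\{1,\ldots,p\}$ and $\sigma$ restricted to the first block acts as $\sigma_1$, so $\sigma(f^{\{i\}})=\sigma_1(f^{\{i\}})=f^{\{\sigma_1(i)\}}$; the case $i\in\{p+1,\ldots,n\}$ is symmetric. In particular the set $\{f^{\{1\}},\ldots,f^{\{n\}}\}$ is globally stable under the action, completing the proof that it is equivariant with respect to $\mathcal{S}_{\{1,\ldots,p\}}\times \mathcal{S}_{\{p+1,\ldots,n\}}$. There is no real obstacle here: the only point that requires a sentence of care is making sure the chain rule is written correctly so that a single index survives, and that the resulting substitution is recognized as the group action rather than merely a relabelling.
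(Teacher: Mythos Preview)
Your proof is correct and follows essentially the same idea as the paper: differentiate the invariance identity and use the chain rule to obtain $\sigma(f^{\{i\}})=f^{\{\sigma(i)\}}$. The only cosmetic difference is that the paper treats the two factors separately via the canonical inclusions $\sigma_1\mapsto(\sigma_1,e_2)$ and $\sigma_2\mapsto(e_1,\sigma_2)$ and invokes the identity $\sigma_1(\partial f/\partial x_i)=\partial(\sigma_1 f)/\partial x_{\sigma_1(i)}$ directly, whereas you handle the full element $\sigma=(\sigma_1,\sigma_2)$ at once and spell the chain rule out explicitly.
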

\begin{proof}
We will use the canonical inclusions  $ \mathcal{S}_{\{1,\ldots,p\}} \to \mathcal{S}_{\{1,\ldots,p\}}\times\mathcal{S}_{\{p+1,\ldots,n\}}, \sigma_1 \mapsto (\sigma_1, e_2)$ and  $ \mathcal{S}_{\{p+1,\ldots,n\}} \to \mathcal{S}_{\{1,\ldots,p\}}\times\mathcal{S}_{\{p+1,\ldots,n\}}, \sigma_2 \mapsto (e_1,\sigma_2)$ where $e_1, e_2$ are unit elements of $ \mathcal{S}_{\{1,\ldots,p\}}$ and $\mathcal{S}_{\{p+1,\ldots,n\}}$ respectively.
For all $i\in \{1,\ldots,p\}$ and $\sigma_1 \in \mathcal{S}_{\{1,\ldots,p\}},$ we have $\sigma_1\Big(f^{\{i\}}\Big)=\sigma_1\Big(\frac{\partial f}{\partial x_{i}}\Big)=\frac{\partial (\sigma_1f)}{\partial x_{\sigma_1(i)}}=\frac{\partial f}{\partial x_{\sigma_1(i)}} =f^{\{ {\sigma_1(i)} \}}.$ For all $j\in \{p+1,\ldots,n\}$ and $\sigma_2 \in \mathcal{S}_{\{p+1,\ldots,n\}},$ , $\sigma_2\Big(f^{\{j\}}\Big)=\sigma_2\Big(\frac{\partial f}{\partial x_{j}}\Big)=\frac{\partial (\sigma_2 f)}{\partial x_{\sigma_2(j)}}=\frac{\partial f}{\partial x_{\sigma_2(j)}} =f^{\{ {\sigma_2(j)} \}}.$ For all $k \in \{1,\ldots,p\}$ $\sigma= (\sigma_1, \sigma_2) \in \mathcal{S}_{\{1,\ldots,p\}}\times\mathcal{S}_{\{p+1,\ldots,n\}}$, we have $\sigma \Big( f^{\{k\}} \Big)= \begin{cases}
\sigma_1 \Big( f^{\{k\}} \Big)= f^{\{ {\sigma_1(k)} \}}\  \mbox{if}\  k\in \{1,\ldots,p\} \\
\sigma_2 \Big( f^{\{k\}} \Big)= f^{\{ {\sigma_2(k)} \}}\  \mbox{if}\  k\in \{p+1,\ldots,n\}.
\end{cases}$
Hence $\sigma \Big( f^{\{k\}} \Big) \in  \{ f^{\{1\}},\ldots ,f^{\{n\}} \}$, for all  $k \in \{1,\ldots,p\}$ $\sigma= (\sigma_1, \sigma_2) \in \mathcal{S}_{\{1,\ldots,p\}}\times\mathcal{S}_{\{p+1,\ldots,n\}}$. and the set of partial derivative of $f$ form  an equivariant polynomial system with respect to $\mathcal{S}_{\{1,\ldots,p\}}\times\mathcal{S}_{\{p+1,\ldots,n\}} $.
\end{proof}

As a consequence of this lemma, Theorem \ref{thm:maintheorem} can be applied in order to decompose the resultant of the polynomials $f^{\{1\}},f^{\{2\}},\ldots,f^{\{p\}},f^{\{p+1\}},\ldots f^{\{n\}}$ and hence, by \eqref{eq:discressym}, to decompose the discriminant of the $\mathcal{S}_{\{1,\ldots,p\}}\times\mathcal{S}_{\{p+1,\ldots,n\}} $-invariant polynomial $f$. 

\begin{theorem}\label{thm:discmaintheorem} Assume that $n\geq 2$ and $d\geq 2$. With the above notation, the following equalities hold.

\noindent $\bullet$ If $p< d$ and $q< d$ then :
\begin{multline*}
d^{a(n,d)}\Disc\left(f\right)=\\
 \prod_{\substack{\Lambda\vdash (p,q)}}
\Res\left( f_{(\lambda,\lambda')}^{\{1\}}, f_{(\lambda,\lambda')}^{\{1,2\}}, \ldots, f_{(\lambda,\lambda')}^{\{1,2,\ldots,r_1\}},f_{(\lambda,\lambda')}^{\{p+1\}}, f_{(\lambda,\lambda')}^{\{p+1,p+2\}}, \ldots, f_{(\lambda,\lambda')}^{\{p+1,p+2,\ldots,p+r_2\}} 
\right)^{m_{\lambda}m_{\lambda'}}.
\end{multline*}

\noindent $\bullet$ If $p\geqslant d$ and $q< d$ then :
\begin{multline*}
d^{a(n,d)}\Disc\left(f\right)=
  \left(f^{\{1,\ldots,d\}}\right)^{\mu_1}\\ \times\prod_{\substack{\Lambda\vdash (p,q)\\ r_1< d}}
\Res\left( f_{(\lambda,\lambda')}^{\{1\}}, f_{(\lambda,\lambda')}^{\{1,2\}}, \ldots, f_{(\lambda,\lambda')}^{\{1,2,\ldots,r_1\}},f_{(\lambda,\lambda')}^{\{p+1\}}, f_{(\lambda,\lambda')}^{\{p+1,p+2\}}, \ldots, f_{(\lambda,\lambda')}^{\{p+1,p+2,\ldots,p+r_2\}} 
\right)^{m_{\lambda}m_{\lambda'}}.
\end{multline*}

\noindent $\bullet$ If $p< d$ and $q\geqslant d$ then :
\begin{multline*}
d^{a(n,d)}\Disc\left(f\right)=
  \left(f^{\{p+1,\ldots,p+d\}}\right)^{\mu_2}\\ \times\prod_{\substack{\Lambda\vdash (p,q)\\ \ r_2< d}}
\Res\left( f_{(\lambda,\lambda')}^{\{1\}}, f_{(\lambda,\lambda')}^{\{1,2\}}, \ldots, f_{(\lambda,\lambda')}^{\{1,2,\ldots,r_1\}},f_{(\lambda,\lambda')}^{\{p+1\}}, f_{(\lambda,\lambda')}^{\{p+1,p+2\}}, \ldots, f_{(\lambda,\lambda')}^{\{p+1,p+2,\ldots,p+r_2\}} 
\right)^{m_{\lambda}m_{\lambda'}}.
\end{multline*}

\noindent $\bullet$ If $p\geqslant d$ and $q\geqslant d$ then :
\begin{multline*}
d^{a(n,d)}\Disc\left(F\right)=
  \left(f^{\{1,\ldots,d\}}\right)^{\mu_1}\times \left(f^{\{p+1,\ldots,p+d\}}\right)^{\mu_2}\\ \times\prod_{\substack{\Lambda\vdash (p,q)\\ r_1< d,\ r_2< d}}
\Res\left( f_{(\lambda,\lambda')}^{\{1\}}, f_{(\lambda,\lambda')}^{\{1,2\}}, \ldots, f_{(\lambda,\lambda')}^{\{1,2,\ldots,r_1\}},F_{(\lambda,\lambda')}^{\{p+1\}}, f_{(\lambda,\lambda')}^{\{p+1,p+2\}}, \ldots, f_{(\lambda,\lambda')}^{\{p+1,p+2,\ldots,p+r_2\}} 
\right)^{m_{\lambda}m_{\lambda'}}.
\end{multline*}

where
 \begin{equation*}
    \mu_1:=n(d-1)^{n-1}-\sum_{ \substack{\lambda\vdash p \\ r_1 < d}} m_{\lambda} 
   \left(\sum_{j=1}^{r_1}\dfrac{(d-1)\cdots(d-r_1)\times (d-1)^q}{d-j}+(d-1)\cdots(d-r_1)\times q(d-1)^{q-1} \right).
   \end{equation*}
   and
 \begin{equation*}
    \mu_2:=n(d-1)^{n-1}-\sum_{ \substack{\lambda'\vdash q \\ r_2 < d}} m_{\lambda'} 
   \left(\sum_{j=1}^{r_2}\dfrac{(d-1)\cdots(d-r_2)\times (d-1)^p}{d-j}+(d-1)\cdots(d-r_2)\times p(d-1)^{p-1} \right).
   \end{equation*}
\end{theorem}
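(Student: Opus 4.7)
The plan is to reduce Theorem \ref{thm:discmaintheorem} to a direct application of Theorem \ref{thm:maintheorem} applied to the system of partial derivatives $\{f^{\{1\}},\ldots,f^{\{n\}}\}$ of $f$, combined with the defining identity $d^{a(n,d)}\Disc(f)=\Res(f^{\{1\}},\ldots,f^{\{n\}})$ recorded in \eqref{eq:discressym}.

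First, I would invoke the preceding Lemma to see that the system of partial derivatives is $\mathcal{S}_{\{1,\ldots,p\}}\times\mathcal{S}_{\{p+1,\ldots,n\}}$-equivariant. Since $f$ is homogeneous of degree $d$, each $f^{\{i\}}$ is homogeneous of degree $d-1$, so the hypotheses of Theorem \ref{thm:maintheorem} are satisfied with the degree parameter equal to $d-1$ rather than $d$. The four cases $p\leq d-1$ vs.\ $p>d-1$ and $q\leq d-1$ vs.\ $q>d-1$ in Theorem \ref{thm:maintheorem} become, after the shift $d\mapsto d-1$, the four cases $p<d$ vs.\ $p\geq d$ and $q<d$ vs.\ $q\geq d$ that appear in the present statement.

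Next, I would apply Theorem \ref{thm:maintheorem} to the system $\{f^{\{1\}},\ldots,f^{\{n\}}\}$ of degree $d-1$, and substitute the resulting resultant decomposition into \eqref{eq:discressym}. The factor $f^{\{1,\ldots,d+1\}}$ appearing in Theorem \ref{thm:maintheorem} becomes $f^{\{1,\ldots,d\}}$, the factor $f^{\{p+1,\ldots,p+1+d\}}$ becomes $f^{\{p+1,\ldots,p+d\}}$, and the inner resultants indexed by partitions $\Lambda\vdash(p,q)$ with $r_1\leq d-1$, $r_2\leq d-1$ become those indexed by $r_1<d$, $r_2<d$. The multinomial exponents $m_\lambda m_{\lambda'}$ are carried over unchanged, and the left-hand side $\Res(f^{\{1\}},\ldots,f^{\{n\}})$ is replaced by $d^{a(n,d)}\Disc(f)$ using \eqref{eq:discressym}.

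The only substantive verification is that the exponents $\mu_1,\mu_2$ obtained from Theorem \ref{thm:maintheorem} by the substitution $d\mapsto d-1$ coincide literally with the formulas displayed here. I expect this to be the main (though routine) obstacle: one checks that $nd^{n-1}$ becomes $n(d-1)^{n-1}$, the falling factorial $d(d-1)\cdots(d-r_1+1)$ becomes $(d-1)(d-2)\cdots(d-r_1)$, the denominator $d-j+1$ becomes $d-j$, and the factors $d^q$, $qd^{q-1}$ become $(d-1)^q$, $q(d-1)^{q-1}$, with the analogous substitution producing $\mu_2$. A final minor point is that the $\pm$ sign appearing in Theorem \ref{thm:maintheorem} specializes to $+1$ here; this is forced by the standard normalization of the discriminant in \eqref{eq:discressym}, exactly as in the symmetric-group case \cite[Theorem 4.2]{BuKa16}.
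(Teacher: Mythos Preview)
Your proposal is correct and follows exactly the approach of the paper: the paper's proof consists of a single sentence stating that the formulas are obtained by specializing Theorem \ref{thm:maintheorem} to the system of partial derivatives, noting that these have degree $d-1$ rather than $d$. Your write-up is in fact more detailed than the paper's (it spells out the $d\mapsto d-1$ substitutions in the exponents and addresses the sign), but the underlying argument is identical.
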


\begin{proof}
These formulas are obtained by specialization of the formulas given in Theorem \ref{thm:maintheorem} with the difference that the polynomials $f^{\{i\}}$, $i=1,\ldots,n$ are of degree $d-1$ in our setting (and not of degree $d$ as in Theorem \ref{thm:maintheorem}).
\end{proof}
\begin{example}
Consider  a homogeneous polynomial of degree 4.
\begin{equation*}
f:=ax_1^4+bx_1^2x_2^2+ax_2^4+cx_3^4+x_3x_4^3+x_3^3x_4+cx_4^4
\end{equation*}
$F$ is not symmetric polynomial but $F$ an invariant polynomial  under the action of the direct product  $\mathcal{S}_{\{1
,2\}}\times\mathcal{S}_{\{3,4\}} $

Its partial derivatives are :
$\begin{cases}
f^{\{1\}}=4ax_1^3+2bx_1x_2^2\\
f^{\{2\}}=4ax_2^3+2bx_1^2x_2\\
f^{\{3\}}=4cx_3^3+x_4^3+3x_3^2x_4\\
f^{\{4\}}=4cx_4^3+x_3^3+3x_3x_4^2
\end{cases}$

$f^{\{1\}}$, $f^{\{2\}}$ equivariant with  respect to $\mathcal{S}_{\{1
,2\}}$ and $f^{\{3\}}$, $f^{\{4\}}$ equivariant with  respect to $\mathcal{S}_{\{3
,4\}}$ 

The formula given in Theorem \ref{thm:discmaintheorem} shows that
\begin{multline*}
4^{\frac{3^{4}-(-1)^{4}}{4}}\Disc(f)=
\Res\left( f_{(2),(2)}^{\{1\}}, f_{(2),(2)}^{\{3\}}\right) \times \Res\left( f_{(1,1),(2)}^{\{1\}}, f_{(1,1),(2)}^{\{1,2\}} f_{(1,1),(2)}^{\{3\}}\right)\\ \times \Res\left( f_{(2),(1,1)}^{\{1\}}, f_{(2),(1,1)}^{\{3\}} f_{(2),(1,1)}^{\{3,4\}}\right)\times \Res\left( f_{(1,1),(1,1)}^{\{1\}},f_{(1,1),(1,1)}^{\{1,2\}} f_{(1,1),(1,1)}^{\{3\}} f_{(1,1),(1,1)}^{\{3,4\}}\right).
\end{multline*}
we have

$\Res\left( f_{(2),(2)}^{\{1\}}, f_{(2),(2)}^{\{3\}}\right)=512(2a+b)^3(c+1)^3$

$\Res\left( f_{(1,1),(2)}^{\{1\}}, f_{(1,1),(2)}^{\{1,2\}} f_{(1,1),(2)}^{\{3\}}\right)=8589934592a^6(2a+b)^3(2a-b)^6(c+1)^6 $

$\Res\left( f_{(2),(1,1)}^{\{1\}}, f_{(2),(1,1)}^{\{3\}} f_{(2),(1,1)}^{\{3,4\}}\right)=262144(2a+b)^6(c-1)^3(8c^2+1)^6 $

$$ \Res\left( f_{(1,1),(1,1)}^{\{1\}},f_{(1,1),(1,1)}^{\{1,2\}} f_{(1,1),(1,1)}^{\{3\}} f_{(1,1),(1,1)}^{\{3,4\}}\right)=73786976294838206464a^{12}(2a+b)^6(2a-b)^{12}(c-1)^6(8c^2+1)^{12} $$

\begin{center}
 $$ \Disc(f) = 77371252455336267181195264a^{18}(c-1)^9(c+1)^9(8c^2+1)^{18}(2a-b)^{18}(2a+b)^{18}.$$
\end{center}

\end{example}

\section*{Acknowledgments}
 Gratitude is expressed to the projects African Centre of Excellence in Mathematical Sciences, Informatics and Applications (ACE-MSIA)



\begin{thebibliography}{9}
\bibitem{BuKa16}
Laurent Bus{\'e} and Anna Karasoulou.
\newblock Resultant of an equivariant polynomial system with respect to the symmetric group.
{ arXiv:1407.2799 v1}[ math.AC] 10 Juillet 2014.
\bibitem{BuJo12}
Laurent Bus{\'e} and Jean-Pierre Jouanolou.
\newblock On the {D}iscriminant {S}cheme of {H}omogeneous {P}olynomials.
\newblock {\em Math. Comput. Sci.}, 8(2):175--234, 2014.



\bibitem{DiCa71}
Jean~A. Dieudonn{\'e} and James~B. Carrell.
\newblock {\em Invariant theory, old and new}.
\newblock Academic Press, New York-London, 1971.



\bibitem{Jou91}
Jean-Pierre Jouanolou.
\newblock Le formalisme du r\'esultant.
\newblock {\em Adv. Math.}, 90(2):117--263, 1991.

\bibitem{Jou97}
Jean-Pierre Jouanolou.
\newblock Formes d'inertie et r\'esultant: un formulaire.
\newblock {\em Adv. Math.}, 126(2):119--250, 1997.

%
%
%
%
















\end{thebibliography}
\end{document}